\documentclass[12pt]{amsart}
\usepackage[T1]{fontenc}
\usepackage[utf8]{inputenc}
\usepackage{color,amssymb,amsmath,mathrsfs,enumerate,esint}
\usepackage{a4wide}
\usepackage{hyperref}
\theoremstyle{plain}
\newtheorem{Theorem}{Theorem}[section]
\newtheorem{Lemma}[Theorem]{Lemma}

\theoremstyle{definition}
\newtheorem{Remark}[Theorem]{Remark}
\newtheorem{Definition}[Theorem]{Definition}

\DeclareMathOperator*{\sgn}{sgn}

\DeclareMathOperator*{\dist}{dist}

\title{Interpolation between H\" older and Lebesgue spaces with applications}
\author{Anastasia Molchanova}
\address{Institute of Mathematics, Acad. Koptyug avenue 4, Novosibirsk, Russia}
\email{a.molchanova@math.nsc.ru}

\author{Tom\' a\v s Roskovec}
\thanks{The second author was supported by EF--IGS2017--Rost-IGS04B1.}
\address{Faculty of Economics, University of South Bohemia, Studentsk\' a 13, \v Cesk\' e Bud\v ejovice, Czech Republic}
\email{troskovec@ef.jcu.cz}

\author{Filip Soudsk\' y}
\thanks{The third author was supported by EF--IGS2017--Soudsk\' y--IGS07P1}
\address{Faculty of Economics, University of South Bohemia, Studentsk\' a 13, \v Cesk\' e Bud\v ejovice, Czech Republic}
\email{fsoudsky@ef.jcu.cz}

\begin{document}

\maketitle
{\centering\footnotesize In memory of V\'aclav N\'ydl.\par}
\begin{abstract}
Classical interpolation inequality of the type $\|u\|_{X}\leq C\|u\|_{Y}^{\theta}\|u\|_{Z}^{1-\theta}$ is well known in the case when $X$, $Y$, $Z$ are Lebesgue spaces. In this paper we show that this result may be extended by replacing norms $\|\cdot\|_{Y}$ or $\|\cdot\|_{X}$ by suitable H\" older semi-norm. We shall even prove sharper version involving weak Lorentz norm. We apply this result to prove the Gagliardo--Nirenberg  inequality for a wider scale of parameters.
\end{abstract}

\section{Introduction and main result}
The classical Sobolev embedding theorem claims that if $1\leq p<n$ then for any weakly differentiable function $u\in WL^{p}$ one has
$$
\|u\|_{p^{*}}\leq C\|\nabla u\|_{p}, 
$$
where $p^{*}=\frac{np}{n-p}$ and $C>0$ is independent of $u$. If $p>n$ then by the Morrey lemma, for the continuous representative the following holds
$$
\|u\|_{\mathcal{C}^{0,1-\frac{n}{p}}}\leq C \|\nabla u\|_{p}.
$$
These are classical results which can be found, for instance, in classical books
\cite{AD} or \cite{EG}. Following the notation of L.~Nirenberg \cite[Lecture II]{Ni}, consider the extended norm for \\
$-\infty < \frac{1}{p}<\infty$.
\begin{Definition}\label{def:extended_def}
For $p>0$ the norm $\|\cdot\|_p$ is defined by
$$\|u\|_{p} = \left(\int_{\mathbb{R}^{n}} |u|^{p} dx\right)^{\frac{1}{p}};$$
for $p<0$ set numbers $s$ and $\tilde{p}$ by $s = [-n/p]$ (where $[\alpha]$ stands for the integer part of $\alpha$), $n/\tilde{p} = s + n/p$, and define
\begin{equation}
\begin{aligned}\label{HoldersemiGen}
& \|u\|_p = \|\nabla^s u\|_{\tilde{p}},  & \qquad \text{if} \: -\infty < \tilde{p} < -n, \\
& \|u\|_p = \|\nabla^s u\|_{\infty},  & \qquad \text{if} \: s = -n/p, \\
\end{aligned}
\end{equation}
where 
the semi-norm $\|\cdot\|_{\tilde{p}}$, $-\infty < \tilde{p} < -n$, is defined by
\begin{equation}\label{HoldersemiEq}
\|u\|_{\tilde{p}}:=\displaystyle{\sup_{x,y\in\mathbb{R}^{n}}}\frac{|u(x)-u(y)|}{|x-y|^{-\frac{n}{\tilde{p}}}}.
\end{equation}
\end{Definition}
Equipped with this extended definition we may combine the Sobolev embedding with the Morrey lemma to obtain
$$
WL^{p}\hookrightarrow L^{p^{*}},
$$
for all $p\in(-\infty,0)\cup [1,\infty]\setminus\{n\}$. This approach was taken by L.~Nirenberg in his celebrated paper \cite{Ni}. In this paper an interpolation inequality
$$
\|u\|_{p}\leq C\|u\|_{r}^{\theta}\|u\|_{q}^{1-\theta}
$$
in the sense of the extended definition of $\|\cdot\|_{p}$ is needed in the proof of the Gagliardo--Nirenberg inequality. 
The problem was studied by A.~Kufner and A.~Wannabe (see \cite{Kuf}). However, the authors present only the proof for the case of two Lebesgue and one H\" older space. In our paper we shall fill the missing case and give sharper results proving estimates using weak Lorentz space instead of Lebesgue space in the upper bound. Let us remind the reader that if $1\leq p< \infty$ the norm in the space $L^{p,\infty}$ is defined by
$$
\|u\|_{p,\infty}:=\displaystyle{\sup_{t>0}}\,t|\{|u|>t\}|^{\frac{1}{p}}.
$$
One may observe that for any $p\geq 1$ the following holds
$$
\|u\|_{p,\infty}\leq \|u\|_{p}.
$$
We shall now present our main result. More precisely we show that classical interpolation holds even if we use our extended definition of $\|\cdot\|_{p}$. After the proof of the main result, we use our result to proof the most general form of the Gagliardo--Nirenberg interpolation inequality. 
In the particular case the inequality was introduced by O.~Ladyzhenskaya in \cite{La} to investigate the Navier--Stokes equations in two spatial dimensions.
More general version of the inequality was proved by E.~Gagliardo in \cite{Ga} and independently shortly after by L.~Nirenberg in \cite{Ni}.

The usage of the Gagliardo--Nirenberg inequality is enormous and exceeds the field of PDEs. We may mention the weak inverse mapping theorem \cite{CHK}. Finer versions and special cases of the inequality are still needed and formulated, see \cite{CFK} and \cite{MRR}.
\begin{Theorem}\label{MT}
Let $q\in[1,\infty]$, $p,r\in(-\infty,-n]\cup[1,\infty]$ and $\theta\in(0,1)$ be numbers for which
\begin{equation}
\frac{1}{p}=\frac{\theta}{r}+\frac{1-\theta}{q}.
\end{equation}
Then there exists a constant $C$ independent of $u$ such that 
\begin{equation}\label{InterpolationIneq}
\|u\|_{p}\leq C\|u\|_{r}^{\theta}\|u\|_{q,\infty}^{1-\theta}.
\end{equation}
\end{Theorem}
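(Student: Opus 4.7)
The plan is a case analysis based on the signs of $1/p$ and $1/r$. Since $\theta\in(0,1)$ forces $1/p$ to lie strictly between $1/r$ and $1/q$, and since $q\geq 1$ gives $1/q>0$, the combination ``$r\in[1,\infty]$ and $p\leq -n$'' is impossible. Three cases remain: (A) $p,r\in[1,\infty]$; (B) $r\leq -n$ and $p\in[1,\infty]$; (C) $p,r\leq -n$.

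\textbf{Case (A) --- Lebesgue.} I plan to use the layer-cake formula
$$\|u\|_p^p=p\int_0^\infty t^{p-1}|\{|u|>t\}|\,dt$$
together with the weak-norm bound $|\{|u|>t\}|\leq(\|u\|_{q,\infty}/t)^q$ and Chebyshev $|\{|u|>t\}|\leq\|u\|_r^r/t^r$. The relation $1/p=\theta/r+(1-\theta)/q$ places $1/p$ strictly between $1/q$ and $1/r$, so splitting the integral at a threshold $\lambda>0$ makes each half convergent. Applying the weak-$L^q$ bound on one half, Chebyshev on the other, and optimizing in $\lambda$ yields \eqref{InterpolationIneq} with the correct exponents by a direct algebraic check.

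\textbf{Sup-estimate --- the bridge.} For cases (B) and (C) the crucial lemma is
$$\|u\|_\infty\leq C\|u\|_r^{|r|/(|r|+q)}\|u\|_{q,\infty}^{q/(|r|+q)} \qquad(r\leq -n).$$
If $M:=\|u\|_\infty$ is essentially attained at some $x$, the H\"older/Lipschitz bound $|u(x)-u(y)|\leq \|u\|_r|x-y|^{-n/r}$ forces $|u|>M/2$ on a ball $B(x,\rho)$ with $\rho^{-n/r}\sim M/\|u\|_r$; the weak-$L^q$ estimate then yields $c\rho^n\leq(2\|u\|_{q,\infty}/M)^q$, and solving for $M$ gives the inequality (the exponents are correct, as they arise from $0=\theta_0/r+(1-\theta_0)/q$).

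\textbf{Cases (B) and (C).} In case (B), the signs force $p>q$, and the sup-estimate combined with the layer-cake bound
$$\|u\|_p^p\leq p\int_0^{\|u\|_\infty}t^{p-1-q}\|u\|_{q,\infty}^q\,dt=\tfrac{p}{p-q}\|u\|_\infty^{p-q}\|u\|_{q,\infty}^q$$
produces \eqref{InterpolationIneq}. In case (C), the pointwise inequality $|u(x)-u(y)|\leq\min\{\|u\|_r|x-y|^{-n/r},\,2\|u\|_\infty\}$ balanced at $|x-y|^{-n/r}\sim\|u\|_\infty/\|u\|_r$ gives $\|u\|_p\leq C\|u\|_r^{|r|/|p|}\|u\|_\infty^{1-|r|/|p|}$, after which substituting the sup-estimate finishes the proof. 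The only real difficulty throughout is exponent bookkeeping --- checking that the powers of $\|u\|_r$ and $\|u\|_{q,\infty}$ collapse exactly to $\theta$ and $1-\theta$ via $1/p=\theta/r+(1-\theta)/q$; this is a direct but case-specific computation. Boundary instances ($p$ or $q=\infty$, $r=-n$ where H\"older becomes Lipschitz) demand only minor cosmetic adjustments and follow the same scheme.
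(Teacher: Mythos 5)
Your proof is correct, and in the case $r\leq -n$, $p,q\in[1,\infty]$ it takes a genuinely different and more elementary route than the paper. The paper truncates $u$ at level $s$, estimates $\|u-u_s\|_p^p$ by $\|u\|_r^p\int_{E_s}\dist(x,E_s^c)^{-pn/r}\,dx$, and then controls that integral with an isoperimetric lemma (Lemma~\ref{BMR}, proved via Brunn--Minkowski), after which it balances against $\|u_s\|_p^p\lesssim s^{p-q}\|u\|_{q,\infty}^q$ by solving for $s$. You instead factor through $L^\infty$: first derive the sup-estimate (which is exactly the paper's pointwise bound~\eqref{PWE}, there used only in the case $p,r\leq-n$), and then obtain $\|u\|_p^p\lesssim\|u\|_\infty^{p-q}\|u\|_{q,\infty}^q$ from a one-line layer-cake computation; substituting the sup-estimate collapses, by the algebra you flag, to $\|u\|_r^{p\theta}\|u\|_{q,\infty}^{p(1-\theta)}$. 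This dispenses with the isoperimetric machinery and the need to assume a continuous distribution function when optimizing in $s$. Your case $p,r\leq-n$ is essentially the paper's $N_s/D_s$ split, merely deferring the substitution of the sup-estimate to the end, and case (A) is the standard real interpolation of $L^{q,\infty}$ against $L^r$ that the paper calls folklore. Two small points worth tightening: state the sup-estimate as a pointwise bound $|u(x)|\lesssim\|u\|_r^{|r|/(q+|r|)}\|u\|_{q,\infty}^{q/(q+|r|)}$ valid at every $x$ and then take the supremum, rather than assuming $\|u\|_\infty$ is attained; and observe explicitly that the sign constraints force $q<\infty$ in case (B), so the weak-$L^q$ distribution bound is always applicable.
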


\section{Proofs}
In the proof for the quantities dependent on function $u$, $A=A(u),B=B(u)$ we shall write
$$
A\lesssim B
$$
if there exists constant $C>0$ independent of $u$ such that
$$
A\leq CB.
$$
We write $A\approx B$ if $A\lesssim B$ and $B\lesssim A$.

In the proof, we shall need one auxiliary isoperimetric inequality. To state it we need the following notation. For a set $M\subset\mathbb{R}^{n}$ and $t>0$ define
$$
\begin{aligned}
(M)_{t}&:=\{x\in\mathbb{R}^{n}:\dist(x,M^{c})>t\},\\
(M)^{t}&:=\{x\in\mathbb{R}^{n}:\dist(x,M)<t\}.
\end{aligned}
$$

\begin{Lemma}\label{BMR}
Let $M\subset \mathbb{R}^{n}$ be a Lebesgue measurable set of finite measure. Let $B$ be a ball such that $|B|=|M|$. Then for any $t>0$ one has $|(B)_{t}|\geq|(M)_{t}|$.
\end{Lemma}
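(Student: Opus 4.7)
The plan is to deduce the lemma from the Brunn--Minkowski inequality. The key geometric fact I would establish first is the inclusion
$$
(M)_{t} + \overline{B(0,t)} \subset M.
$$
This is essentially definitional: if $x \in (M)_{t}$ then $\dist(x, M^{c}) > t$, so for any $y$ with $|y| \leq t$ the point $x+y$ cannot belong to $M^{c}$. Note that $(M)_{t}$ is open (as the preimage of $(t,\infty)$ under the continuous function $x \mapsto \dist(x,M^{c})$), and $\overline{B(0,t)}$ is compact, so their Minkowski sum is open and hence Lebesgue measurable.

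Next, I would apply the Brunn--Minkowski inequality to the two sets above. Denoting $\omega_{n} = |B(0,1)|$, this yields
$$
|M|^{1/n} \;\geq\; \bigl| (M)_{t} + \overline{B(0,t)} \bigr|^{1/n} \;\geq\; |(M)_{t}|^{1/n} + \omega_{n}^{1/n}\, t.
$$
Rearranging gives the upper bound $|(M)_{t}|^{1/n} \leq |M|^{1/n} - \omega_{n}^{1/n}\, t$, valid whenever the right-hand side is nonnegative (and forcing $(M)_{t}$ to have measure zero otherwise).

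Finally, for the competitor ball: if $B = B(x_{0},R)$ satisfies $|B|=|M|$, then $R = (|M|/\omega_{n})^{1/n}$, and directly from the definition $(B)_{t} = B(x_{0}, R-t)$ when $R > t$ and $(B)_{t} = \emptyset$ otherwise. Hence
$$
|(B)_{t}|^{1/n} = \bigl( |M|^{1/n} - \omega_{n}^{1/n}\, t \bigr)_{+}.
$$
Comparing this identity with the Brunn--Minkowski estimate above immediately yields $|(M)_{t}| \leq |(B)_{t}|$, as required.

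The only real obstacle is the verification of the inclusion $(M)_{t} + \overline{B(0,t)} \subset M$ and confirming measurability of the Minkowski sum so that Brunn--Minkowski applies cleanly; once those are in hand, the inequality collapses to a one-line arithmetic comparison of two expressions of the form $|M|^{1/n} - \omega_{n}^{1/n}t$.
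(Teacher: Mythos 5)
Your proof is correct, and it takes a genuinely more direct route than the paper's. The paper argues by contradiction: it assumes $|(B)_{t}|<|(M)_{t}|$, inflates $B$ slightly to $(B)^{\delta}$ so that $|((B)^{\delta})_{t}|=|(M)_{t}|$, then invokes the dual isoperimetric fact (Chavel, Theorem III.2.2) that a ball \emph{minimizes} the outer parallel body among sets of equal measure, obtaining $|B|<|(B)^{\delta}|\leq|((M)_{t})^{t}|\leq|M|=|B|$, a contradiction. You instead apply the classical additive Brunn--Minkowski inequality directly to the inclusion $(M)_{t}+\overline{B(0,t)}\subset M$, which produces the sharp closed-form bound $|(M)_{t}|^{1/n}\leq\bigl(|M|^{1/n}-\omega_{n}^{1/n}t\bigr)_{+}$, and you then observe that this bound is attained exactly by the ball. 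Both arguments rest on the same underlying Brunn--Minkowski machinery, but your version avoids the contradiction, avoids the auxiliary inflation by $\delta$ and the continuity argument needed to choose it, and makes the inequality fully quantitative. The only minor point worth making explicit is the trivial case $(M)_{t}=\emptyset$ (needed since Brunn--Minkowski requires nonempty summands), and, if one wishes to be scrupulous about the hypotheses of Brunn--Minkowski, that $(M)_{t}$ can be exhausted by compact subsets since it is open; neither affects the substance of the argument.
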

\begin{proof}
Note that for arbitrary measurable set $M\subset \mathbb{R}^{n}$ one has $\left((M)_{t}\right)^{t}\subset M$, in particular $\left((B)_{t}\right)^{t}= B$ for any ball $B$. Let $B$ be a ball satisfying $|B|=|M|$. Assume that there exists $t>0$ for which $|(B)_{t}|<|(M)_{t}|$ holds. Then there exists $\delta>0$ such that $|((B)^\delta)_{t}|=|M_{t}|$. By a version of the Brun--Minkowski inequality (see \cite{II}[Theorem III.2.2]) we have
$$
|B|<|(B)^{\delta}|\leq|((M)_{t})^{t}|\leq |M|,
$$
which is a contradiction. Therefore $|(M)_{t}|\leq|(B)_{t}|$ for all $t>0$.
\end{proof}

\begin{proof}[Proof of Theorem \ref{MT}]
The classical case with $p$, $q$, $r\in[1,\infty]$ is a folklore. Let us focus on the other cases. First suppose that $p$, $q\in[1,\infty]$ and $r\in(-\infty,-n]$.

\par

First note that we may assume that $u\in L^{q,\infty}\cap \mathcal{C}^{0,-\frac{n}{r}}$, otherwise there is nothing to prove. In particular, $u$ is continuous. For given $s>0$, let us define
$$
u_{s}:=\sgn(u)\min\{|u|,s\}.
$$
Observe 
\begin{equation}\label{splitcase1Eq}
\|u\|_{p}^{p}\lesssim\|u-u_{s}\|_{p}^{p}+\|u_{s}\|_{p}^{p}.
\end{equation}
Let us estimate each term separately. First denote
$$
E_{s}:=\{|u|>s\}.
$$
By \eqref{HoldersemiEq} and the Fubini theorem we estimate
\begin{equation}\label{uminususEq}
\begin{aligned}
\|u-u_{s}\|_{p}^{p}&=\int_{E_{s}}(|u|-s)^{p}\textup{d}x\\
&\leq\|u\|_{r}^{p}\int_{E_{s}}\dist(x,E_{s}^{c})^{-\frac{pn}{r}}\textup{d}x\\
&\leq\|u\|_{r}^{p}\int_{0}^{\infty}|\{x:\dist(x,E_{s}^{c})>t^{-\frac{r}{pn}}\}|\textup{d}t.
\end{aligned}
\end{equation}
Let us consider ball $B=B(0,\rho)$, such that $|B|=|E_{s}|$ (therefore $\rho=\omega_{n}^{-\frac{1}{n}}|E_{s}|^{\frac{1}{n}}$). By Lemma~\ref{BMR} we obtain the estimate for the size of the set $$|\{x:\dist(x,E_{s}^{c})>t^{-\frac{r}{pn}}\}|\leq |B(0,\rho-t^{-\frac{r}{pn}})|,$$ hence the last expression of \eqref{uminususEq} can be further estimated by
$$
\|u\|_{r}^{p}\int_{0}^{\rho^{-\frac{pn}{r}}}|B(0,\rho-t^{-\frac{r}{pn}})|\textup{d}t\approx\|u\|_{r}^{p}\rho^{n-\frac{pn}{r}}\approx\|u\|_{r}^{p}|\{|u|>s\}|^{1-\frac{p}{r}}.
$$

\par

Let us now estimate the other term form \eqref{splitcase1Eq}. We obtain
$$
\begin{aligned}
\|u_{s}\|_{p}^{p}&=\int_{\mathbb{R}^{n}}\min\{s,|u|\}^{p}\textup{d}x\\
&=\int_{0}^{s^{p}}|\{|u|>z^{\frac{1}{p}}\}|\textup{d}z\\
&\approx\int_{0}^{s}t^{p-1}|\{|u|>t\}|\textup{d}t\\
&\lesssim\int_{0}^{s}t^{p-q-1} t^{q}|\{|u|>t\}|\textup{d}t\\
&\lesssim\|u\|_{q,\infty}^{q}\int_{0}^{s}t^{p-q-1}\textup{d}t    \\
&\lesssim s^{p-q}\|u\|_{q,\infty}^{q} .
\end{aligned}
$$
Therefore
$$
\begin{aligned}
\|u\|_{p}^{p}&\lesssim\|u-u_{s}\|_{r}^{p}|\{|u|>s\}|^{1-\frac{p}{r}}+s^{p-q}\|u_{s}\|_{q,\infty}^{q}\\
&\leq \|u\|_{r}^{p}|\{|u|>s\}|^{1-\frac{p}{r}}+s^{p-q}\|u\|_{q,\infty}^{q}.
\end{aligned}
$$
Now we may suppose that the distribution function is continuous (otherwise we approximate $u$ by functions with continuous distribution function in $L^{q,\infty}\cap L^{r}$). Therefore there exists $s\in(0,\infty)$ such that 
$$
\frac{\|u\|_{r}^{p}}{\|u\|_{q,\infty}^{q}}=s^{p-q}|\{|u|>s\}|^{\frac{p}{r}-1}.
$$
For this choice of $s$ both term on the right-hand side of the estimate are equal. We obtain
$$
\begin{aligned}
\|u\|_{p}^{p}&\lesssim\left(\|u\|_{r}^{p}|\{|u|>s\}|^{1-\frac{p}{r}}\right)^{\theta}\left(s^{p-q}\|u\|_{q,\infty}^{q}\right)^{1-\theta}\\
&\leq\|u\|_{r}^{p\theta}|\{|u|>s\}|^{\theta-\frac{\theta p}{r}}s^{(1-\theta)(p-q)}\|u\|_{q,\infty}^{q(1-\theta)}\\
&\leq \|u\|_{r}^{p\theta}\|u\|_{q,\infty}^{(p-q)(1-\theta)}\|u\|_{q,\infty}^{q(1-\theta)}\\
&\leq \|u\|_{r}^{p\theta}\|u\|_{q,\infty}^{p(1-\theta)}.
\end{aligned}
$$
Rise both side of the inequality to $\frac{1}{p}$ to finish the proof.

\par
\bigskip

Let us consider the other case, that is $p$, $r\in(-\infty,-n]$ and $q\in[1,\infty)$. We may suppose that $u\in L^{r}\cap L^{q,\infty}$ otherwise there is nothing to prove. We claim that if $|u(x)|>0$ then
$$
B\left(x,\left(\frac{|u(x)|}{2}\right)^{-\frac{r}{n}}\|u\|_{r}^{\frac{r}{n}}\right)\subset\left\{|u|>\frac{|u(x)|}{2}\right\}.
$$
To prove the claim suppose $y$ is a point which is not included in the set on the right-hand side. We have $|u(y)|\leq\tfrac{|u(x)|}{2}$, hence, using \eqref{HoldersemiEq}, we obtain
$$
\|u\|_{r}|x-y|^{-\frac{n}{r}}\geq|u(x)-u(y)|\geq \frac{|u(x)|}{2}.
$$
That implies that $|y-x|$ is bigger than the radius and $y$ does not belong to the left hand side set.

We measure the sets used in the claim to get
$$
\left(\frac{|u(x)|}{2}\right)^{-r+q}\|u\|_{r}^{r}\omega_{n}\leq\left(\frac{|u(x)|}{2}\right)^{q}\left|\left\{|u|>\frac{|u(x)|}{2}\right\}\right|.
$$
Hence, the point-wise estimate
\begin{equation}\label{PWE}
|u(x)|\lesssim\|u\|_{q,\infty}^{\frac{q}{q-r}}\|u\|_{r}^{\frac{r}{r-q}}
\end{equation}
holds. Equipped by this estimate we may proceed to the proof. Let's consider sets
$$
N_{s}:=\{(x,y)\in\mathbb{R}^{2n}:|x-y|\leq s\}
$$
and
$$
D_{s}:=\{(x,y)\in\mathbb{R}^{2n}:|x-y|>s\}.
$$
Now we have
$$
\|u\|_{p}\leq\displaystyle{\sup_{(x,y)\in N_{s}}}\frac{|u(x)-u(y)|}{|x-y|^{-\frac{n}{p}}}+\displaystyle{\sup_{(x,y)\in D_{s}}}\frac{|u(x)-u(y)|}{|x-y|^{-\frac{n}{p}}}=:I+II.
$$
Using \eqref{HoldersemiEq}, the first term may be easily estimated by
$$
I\leq\|u\|_{r}s^{-\frac{n}{r}+\frac{n}{p}}.
$$
As for the other one, we just realize that
$$
II\leq 2\max\{|u(x)|,|u(y)|\}s^{\frac{n}{p}}\lesssim s^{\frac{n}{p}}\|u\|_{q,\infty}^{\frac{q}{q-r}}\|u\|_{r}^{\frac{r}{r-q}}.
$$
Therefore
$$
I+II\lesssim s^{\frac{n}{p}}\|u\|_{q,\infty}^{\frac{q}{q-r}}\|u\|_{r}^{\frac{r}{r-q}}+\|u\|_{r}s^{-\frac{n}{r}+\frac{n}{p}}.
$$
Choose
$$s=\left(\frac{\|u\|_{q,\infty}}{\|u\|_r}\right)^{-\frac{qr}{n(q-r)}},$$
to finish the proof.
\end{proof}

\section{Applications}

Let us show how our result~\eqref{MT} can be used to prove the most general version of the Gagliardo--Nirenberg interpolation inequality once we have proved it for the least possible value of parameter $\theta$. We also noticed that the Gagliardo--Nirenberg interpolation inequality doesn't hold for all the values of parameters for which Nirenberg stated it (see \cite{Ni}). Let us make a little correction and exclude some special cases for which the theorem doesn't hold. For this purpose define inductively the following notation
$$
\begin{aligned}
r^{(0)}&:=r,\\
r^{(k)}&:=\left(r^{(k-1)}\right)^{*}\quad \textup{for }k\in\mathbb{N}. 
\end{aligned}
$$  
\begin{Theorem}\label{GN-corrected}
Let $j$, $k\in\mathbb{N}$, $\theta \in [\frac{j}{k},1]$, $q \in [1,\infty]$, $p\in(-\infty,0)\cup(1,\infty]$, \\$r \in (-\infty,0) \cup [1,\infty]$ be numbers satisfying
$1\leq j < k$ and
\begin{equation}
\frac{1}{p} = \frac{j}{n} + \theta \left(\frac{1}{r} - \frac{k}{n} \right) + \frac{1-\theta}{q},
\end{equation}
moreover 
\begin{equation}\label{UP}
r^{(i)}\neq n
\end{equation}
for all $0\leq i\leq k-j-1$. 
Then there exists $C>0$ such that inequality
\begin{equation}\label{Gagliardo--Nirenberg}
\|\nabla^{j} u \|_{p} \leq C \|\nabla^k u\|^{\theta}_{r} \|u\|^{1-\theta}_{q}
\end{equation}
holds.
\end{Theorem}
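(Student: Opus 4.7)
The plan is to reduce the claim to two endpoint cases in $\theta$, namely $\theta=j/k$ and $\theta=1$, and then to recover every intermediate $\theta$ by applying Theorem~\ref{MT} to the function $v=\nabla^{j}u$.

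At the endpoint $\theta=1$ the exponent relation collapses to $\frac{1}{p_1}=\frac{1}{r}-\frac{k-j}{n}$, so $p_1=r^{(k-j)}$, and the required inequality $\|\nabla^{j}u\|_{p_1}\lesssim\|\nabla^{k}u\|_r$ is simply the iterated Sobolev--Morrey embedding applied $k-j$ times to successive gradients. The hypothesis $r^{(i)}\neq n$ for $0\leq i\leq k-j-1$ is precisely what is needed so that none of the steps in this chain hits the forbidden critical Sobolev value. At the endpoint $\theta=j/k$ the relation becomes $\frac{1}{p_0}=\frac{j}{kr}+\frac{k-j}{kq}$, and the corresponding estimate
$$\|\nabla^{j}u\|_{p_0}\lesssim\|\nabla^{k}u\|_{r}^{j/k}\|u\|_{q}^{1-j/k}$$
is the ``least $\theta$'' Gagliardo--Nirenberg inequality, which we regard as already proved (classically it reduces by induction on $k-j$ to the base case $k=2$, $j=1$, which is in turn proved by integration by parts and H\"older).

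For intermediate $\theta\in(j/k,1)$, I would set $\lambda=(k\theta-j)/(k-j)\in(0,1)$ and verify by a direct calculation that
$$\frac{1}{p}=\frac{\lambda}{p_1}+\frac{1-\lambda}{p_0}.$$
Applying Theorem~\ref{MT} to $v=\nabla^{j}u$ with these three exponents---assigning the H\"older-type slot (MT's $r$) to whichever of $p_0,p_1$ lies in $(-\infty,-n]$ and the weak-Lorentz slot (MT's $q$) to the one in $[1,\infty]$---and then using the trivial estimate $\|\cdot\|_{q,\infty}\leq\|\cdot\|_{q}$, one obtains
$$\|\nabla^{j}u\|_{p}\lesssim\|\nabla^{j}u\|_{p_1}^{\lambda}\,\|\nabla^{j}u\|_{p_0}^{1-\lambda}.$$
Substituting the two endpoint estimates and simplifying, the exponent on $\|\nabla^{k}u\|_r$ becomes $\lambda+(1-\lambda)j/k=\theta$ and the exponent on $\|u\|_q$ becomes $(1-\lambda)(k-j)/k=1-\theta$, which is the claim.

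The main obstacle will be the bookkeeping required to verify in every parameter regime that $p_0$ and $p_1$ fit into the admissibility hypotheses of Theorem~\ref{MT}. The sign of $\frac{1}{r}-\frac{k}{n}-\frac{1}{q}$ governs the ordering of $p_0$ and $p_1$, so a brief case split by this sign is unavoidable in order to decide which endpoint plays the H\"older role and which plays the Lorentz role; moreover, the assumption $r^{(i)}\neq n$ is essential not only for legitimising the iterated Sobolev chain at the $\theta=1$ endpoint but also for ensuring that $p_1$ itself falls in the admissible range required by Theorem~\ref{MT}. Once this admissibility is secured, the interpolation argument is entirely mechanical.
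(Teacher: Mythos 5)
Your proposal is correct and takes essentially the same approach as the paper: interpolate between the two endpoint cases $\theta=j/k$ (classical Gagliardo--Nirenberg) and $\theta=1$ (iterated Sobolev--Morrey embedding, which is where the hypothesis $r^{(i)}\neq n$ is consumed) by applying Theorem~\ref{MT} to $v=\nabla^{j}u$ and then dominating $\|\cdot\|_{\,\cdot\,,\infty}$ by the Lebesgue norm. Your interpolation parameter $\lambda$ is the paper's $1-\zeta$, and the paper likewise defers the role-assignment bookkeeping across parameter regimes with a brief ``the other cases are similar,'' so the only difference is that you flag that bookkeeping a bit more explicitly.
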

We shall see from the proof that condition \eqref{UP} is essential and cannot be omitted since the embedding
$$
VL^{n}\hookrightarrow L^{\infty}
$$
does not hold. The proof is done by interpolation of the Gagliardo--Nirenberg inequality in the case of $\theta=j/k$ and the iterated Sobolev embedding
\begin{equation}\label{SOB}
V^{k-j}L^{r}\hookrightarrow L^{r^{(k-j)}}.
\end{equation}
which holds for $r>1$ if and only if \eqref{UP} is satisfied. Let us proceed with the proof.
\begin{proof}
The inequality~\eqref{Gagliardo--Nirenberg} is fulfilled for the extreme values of $\theta$, $\theta = j/k$ and $\theta = 1$. For original proof see \cite{Ni}. More reader-friendly and modern proof,  based on a pointwise estimate in terms of the Hardy--Littlewood maximal function, may be found in \cite{MS}.

Indeed, define $p(\theta)$ by 
\begin{equation*}
\frac{1}{p(\theta)}= \frac{j}{n} + \theta \left(\frac{1}{r} - \frac{k}{n} \right) + \frac{1-\theta}{q}.
\end{equation*}
Since the function $\left(p(\theta)\right)^{-1}$ is a monotone function of $\theta$, there exists $\zeta\in(0,1)$ such that 
$$
\frac{1}{p(\theta)}=\frac{\zeta}{p(\frac{j}{k})}+\frac{1-\zeta}{p(1)}.
$$
From where we derive that $\zeta$ satisfies $\theta = 1 - \zeta \left(1 - \frac{j}{k} \right)$.

If $q \in [1,\infty]$, $p$, $r \in (-\infty,-n) \cup [1,\infty]$,
from~\eqref{InterpolationIneq},\eqref{SOB} and~\eqref{Gagliardo--Nirenberg} 
we derive
$$
\begin{aligned}
\|\nabla^{j} u\|_{p(\theta)} & \lesssim\|\nabla^{j} u\|_{p(\frac{j}{k})}^{\zeta}\|\nabla^{j} u\|_{p(1),\infty}^{1-\zeta} \\
& \lesssim  \|\nabla^{j} u\|_{p(\frac{j}{k})}^{\zeta}\|\nabla^{j} u\|_{p(1)}^{1-\zeta}\\
& \lesssim \|\nabla^k u\|_{r}^{1 - \zeta(1 - \frac{j}{k})} \|u\|_{q}^{\zeta(1-\frac{j}{k})}.
\end{aligned}
$$


If $q \in [1,\infty]$, $p \in (-\infty,-n) \cup [1,\infty]$, $r \in [-n,0)$, by Definition~\ref{def:extended_def} the inequality~\eqref{Gagliardo--Nirenberg} can be rewritten in the form
$$\|\nabla^{j} u \|_{p} \lesssim \|\nabla^{k+s_r} u\|^{\theta}_{\tilde{r}} \|u\|^{1-\theta}_{q},$$
where $s_r = [-n/r]$, $n/\tilde{r} = s_r + n/r$, $\tilde{r} \in (-\infty,-n)$ 
($\tilde{r} = \infty$ and $r^{-1}= 0$ if $s_r = -n/r$)
and the equality 
$$\frac{1}{p} = \frac{j}{n} + \theta \left(\frac{1}{\tilde{r}} - \frac{k+s_r}{n} \right) + \frac{1-\theta}{q}$$
is equivalent to 
$$\frac{1}{p} = \frac{j}{n} + \theta \left(\frac{1}{r} - \frac{k}{n} \right) + \frac{1-\theta}{q}.$$
The other cases are similar.



\end{proof}

\begin{Remark}
In the critical cases (that is when $r^{(i)}=n$) the Gagliardo--Nirenberg inequality in the original L.~Nirenberg's statement cannot hold for $\theta=1$. In the case of $\theta\in(j/k, 1)$ the situation may be saved by using the space $\textup{BMO}$ instead of $L^{\infty}$ (for definition see \cite{PKJF}). We use the result of A.~Cianchi and L.~Pick (see \cite[Theorem 3.1.]{CP}) to obtain embedding 
$$W^{1}L^{n}\hookrightarrow \textup{BMO}.
$$
Now instead of using the interpolation inequality
$$
\|u\|_{p}\leq C\|u\|_{r}^{r/p}\|u\|_{\infty}^{1-r/p}
$$
we use the following sharper version 
$$
\|u\|_{p}\leq\|u\|_{r}^{r/p}\|u\|_{\textup{BMO}}^{1-r/p}.
$$
\end{Remark}
To prove the inequality for the remaining cases would require to prove this interpolation inequality in the upgraded version using the space BMO instead of $L^{\infty}$ when needed. In the case of  $p$, $r\geq 1$ the result is known. The remaining case we leave as an open problem. The BMO spaces can also be used in the Gagliardo--Nirenberg inequality instead of Lebesgue spaces, for details see \cite{MRR} or \cite{SP}.
\bibliographystyle{plain}

\end{document}